\patchcmd{\section}{\scshape}{\bfseries}{}{} 
\renewcommand{\@secnumfont}{\bfseries}
\DeclareFontFamily{U}{BOONDOX-calo}{\skewchar\font=45}
\DeclareFontShape{U}{BOONDOX-calo}{m}{n}{
  <-> s*[1.05] BOONDOX-r-calo}{}
\DeclareFontShape{U}{BOONDOX-calo}{b}{n}{
  <-> s*[1.05] BOONDOX-b-calo}{}
\DeclareMathAlphabet{\mathcalboondox}{U}{BOONDOX-calo}{m}{n}
\SetMathAlphabet{\mathcalboondox}{bold}{U}{BOONDOX-calo}{b}{n}
\DeclareMathAlphabet{\mathbcalboondox}{U}{BOONDOX-calo}{b}{n}
\newcommand{\suchthat}{\;\ifnum\currentgrouptype=16 \middle\fi|\;}
\numberwithin{equation}{section}
\theoremstyle{plain}
	\newtheorem{lemma}{Lemma}
	\newtheorem*{lemma*}{Lemma}
	\newtheorem{theorem}[lemma]{Theorem}
	\newtheorem*{theorem*}{Theorem}
	\newtheorem{corollary}[lemma]{Corollary}
	\newtheorem*{corollary*}{Corollary}
	\newtheorem*{proposition*}{Proposition}
\title[An extension result for maps admitting an AAT]{An extension result for maps admitting an algebraic addition theorem}
\date{\today}
\author{E. Baro}
\address{Departamento de \'Algebra, Facultad de Ciencias Matem\'aticas, Universidad Complutense de Ma\-drid, 28040 Madrid (Spain)}
\curraddr{}
\email{eliasbaro@pdi.ucm.es}
\author{J. de Vicente \and M. Otero}
\address{Departamento de Matem\'aticas, Universidad Aut\'onoma de Madrid, 28049 Madrid (Spain)}
\curraddr{}
\email{juan.devicente@uam.es, margarita.otero@uam.es}
\keywords{Algebraic Addition Theorem, Rational Addition Theorem.} 
\subjclass[2010]{32A20, 33E05, 14P20}
\thanks{All the authors supported by Spanish GAAR MTM2011-22435 and MTM2014-55565. 
Second author also supported by a grant of the International Program of Excellence in Mathematics at 
Universidad Aut\'onoma de Madrid.}
\begin{document}

\addtolength{\textheight}{-3.5mm}
\addtolength{\footskip}{3.5mm}
\pagestyle{plain}

\begin{abstract}
We prove that if an analytic map $f:=(f_1,\ldots ,f_n):U\subset \mathbb{C}^n\rightarrow \mathbb{C}^n$ admits an 
algebraic addition theorem then there exists a meromorphic map $g:=(g_1,\ldots ,g_n):\mathbb{C}^n\dasharrow \mathbb{C}^n$ admitting an 
algebraic addition theorem such that $f_1,\ldots ,f_n$ are algebraic over $\mathbb{C}(g_1,\ldots ,g_n)$ on $U$ 
(this was proved by K. Weierstrass in dimension $1$).
Furthermore, $(g_1,\ldots ,g_n)$ admits a \emph{rational} addition theorem.
\end{abstract}

\maketitle

\section{Introduction}\label{sec:intro}

The aim of this paper is to study maps admitting an algebraic addition theorem, maps whose coordinate functions can be viewed as 
limitting (degenerate) cases of abelian functions.
Let $\mathbb{K}$ be $\mathbb{C}$ or $\mathbb{R}$ and $\mathcalboondox{M}_{\mathbb{K},n}$ be the quotient field of 
$\mathcal{O}_{\mathbb{K},n}$, the ring of power series in $n$ variables with coefficients in $\mathbb{K}$ that 
are convergent in a neighborhood of the origin.

\medskip

\noindent {\bf Definition.} Let $u$ and $v$ be variables of $\mathbb{C}^n$.
We say $(\phi _1,\ldots ,\phi _n)\in \mathcalboondox{M}_{\mathbb{K},n}^n$ admits an \emph{algebraic addition theorem} (AAT) if 
$\phi _1,\ldots ,\phi _n$ are algebraically independent over $\mathbb{K}$ and if each $\phi _i(u+v)$, $i=1,\ldots ,n$, is 
algebraic over 
\[
\mathbb{K}(\phi _1(u),\ldots ,\phi _n(u),\phi _1(v),\ldots ,\phi _n(v)).
\]

\medskip

The concept of AAT was introduced by K. Weierstrass during his lectures on abelian functions in Berlin in $1870$ (see \cite{Weierstrass}).
He stated that the coordinate functions of a \emph{global} meromorphic map admitting an AAT are either abelian functions or 
degenerate abelian functions.
He proved it for dimension $1$ and F. Severi in \cite{Severi} (see also Y. Abe \cite{Abe,Abe_errata}) for dimension $n$.
Weierstrass also proved the following \emph{extension result}: the germ of an analytic function admitting an AAT can be transformed 
algebraically into the germ of a global function admitting an AAT; and he stated, without a proof, an $n$-dimensional version of it.
As far as we know, no such proof existed in the literature so far. 
We prove it here as a consequence (Corollary \ref{C2}) of the main result of the paper, that we now state.

\begin{theorem}[Extension Theorem]\label{T2}
Let $\phi :=(\phi _1,\ldots ,\phi _n)\in \mathcalboondox{M}_{\mathbb{K},n}^n$ admit an AAT.
Then, there exist $\psi :=(\psi _1,\ldots ,\psi _n)\in \mathcalboondox{M}_{\mathbb{K},n}^n$ admitting an AAT and algebraic over 
$\mathbb{K}(\phi)$, and an additional meromorphic series $\psi _0\in \mathcalboondox{M}_{\mathbb{K},n}$ algebraic over 
$\mathbb{K}(\psi )$ such that,
\begin{enumerate}
\item[$(1)$] For each $f(u)\in \mathbb{K}\big(\psi _0(u),\ldots ,\psi _n(u)\big)$,
\smallskip
\begin{enumerate}
\item[$(a)$] $f(u+v)\in \mathbb{K}\big(\psi _0(u),\ldots ,\psi _n(u),\psi _0(v),\ldots ,\psi _n(v)\big)$ and
\smallskip
\item[$(b)$] $f(-u)\in \mathbb{K}\big(\psi _0(u),\ldots ,\psi _n(u)\big)$.
\end{enumerate}
\smallskip
\item[$(2)$] Each $\psi _0,\ldots ,\psi _n$ is the quotient of two convergent power series whose complex domain of convergence is 
$\mathbb{C}^n$.
\end{enumerate}
\end{theorem}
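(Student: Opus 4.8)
The plan is to extract from the AAT a commutative algebraic group $G$ over $\mathbb{K}$ of dimension $n$ whose analytic uniformization realizes $\phi$ up to algebraic equivalence, and then to take $\psi_0,\dots,\psi_n$ to be generators of the function field $\mathbb{K}(G)$.

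First I would translate $\phi$ by a generic point (a real one when $\mathbb{K}=\mathbb{R}$), which does not affect having an AAT, so that $\phi$ is holomorphic at $0$ and $\phi(0)$ is a generic point of the variety $V$ with $\mathbb{K}(V)=\mathbb{K}(\phi)$. Specializing the addition theorem, written locally as $\phi_j(u+v)=\mu_j(\phi(u),\phi(v))$ with $\mu_j$ a branch of an algebraic function over $\mathbb{K}(X,Y)$, in its second argument shows that each $\phi_j(u+a)$ is algebraic over $\mathbb{K}(\phi(u))$; differentiating that identity in $u$ and then putting $u=0$ gives the key Lemma: each $\partial_i\phi_j$ is algebraic over $\mathbb{K}(\phi)$, and hence so is every partial derivative of every element algebraic over $\mathbb{K}(\phi)$.

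The main obstacle is the passage from the \emph{algebraic} addition theorem to a \emph{rational} one. The addition correspondence $W\subseteq V\times V\times V$ — the Zariski closure of $\{(\phi(u),\phi(v),\phi(u+v))\}$ — is only finite, not birational, over $V\times V$, and one must separate its branches so that on a variety $V'$ with $\mathbb{K}(V')$ finite over $\mathbb{K}(\phi)$ the branch cut out by the germ of $\phi$ becomes an honest rational map $m\colon V'\times V'\dashrightarrow V'$, i.e.\ a rational addition theorem. This is where the Lemma is needed: the selected branch is the one with a prescribed jet at the identity, which is an algebraic condition separating it from its conjugates. Granting this, $m$ is associative and commutative — both hold near $0$ because $+$ on $\mathbb{C}^n$ is, hence identically, by analytic continuation — and $(x,y)\mapsto(x,m(x,y))$ is birational on $V'\times V'$ (its local inverse corresponds to $(u,w)\mapsto(u,w-u)$). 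So Weil's theorem on birational group laws yields a commutative algebraic group $G/\mathbb{K}$ of dimension $n$ and a birational map $V'\dashrightarrow G$ carrying $m$ to the group law. Composing with $\phi$ produces a holomorphic germ $(\mathbb{C}^n,0)\to(G(\mathbb{C}),e)$ that is simultaneously a local isomorphism and a local homomorphism; since $\mathbb{C}^n$ is simply connected, the monodromy theorem for local homomorphisms extends it to a Lie homomorphism $\pi\colon\mathbb{C}^n\to G(\mathbb{C})$ with Zariski-dense image.

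Finally I would take $\psi_1,\dots,\psi_n\in\mathbb{K}(G)$ to be a transcendence basis regular at $e$ and $\psi_0\in\mathbb{K}(G)$ a primitive element of $\mathbb{K}(G)$ over $\mathbb{K}(\psi_1,\dots,\psi_n)$, again regular at $e$, and identify each $\psi_i$ with the germ at $0$ of $\psi_i\circ\pi\in\mathcalboondox{M}_{\mathbb{K},n}$. Density of the image of $\pi$ makes this identification faithful, so $\psi_1,\dots,\psi_n$ are algebraically independent over $\mathbb{K}$, $\psi_0$ is algebraic over $\mathbb{K}(\psi)$, and $\mathbb{K}(\psi_0,\dots,\psi_n)=\mathbb{K}(G)$; each $\psi_i$ is algebraic over $\mathbb{K}(\phi)$ because the whole construction takes place inside the algebraic closure of $\mathbb{K}(\phi)$. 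For~(1), multiplication $m$ and inversion $\iota$ on $G$ are morphisms, so $\psi_i\circ m\in\mathbb{K}(G\times G)=\mathbb{K}\big(\psi_0\circ p_1,\dots,\psi_n\circ p_1,\psi_0\circ p_2,\dots,\psi_n\circ p_2\big)$ and $\psi_i\circ\iota\in\mathbb{K}(G)=\mathbb{K}(\psi_0,\dots,\psi_n)$; pulling these back along $\pi$ and using $\pi(u+v)=\pi(u)\pi(v)$, $\pi(-u)=\pi(u)^{-1}$ gives $\psi_i(u+v)\in\mathbb{K}\big(\psi_0(u),\dots,\psi_n(u),\psi_0(v),\dots,\psi_n(v)\big)$ and $\psi_i(-u)\in\mathbb{K}\big(\psi_0(u),\dots,\psi_n(u)\big)$; since these fields are closed under rational functions, $(1)(a)$ and $(1)(b)$ follow for all $f$, and in particular $(\psi_1,\dots,\psi_n)$ admits an AAT. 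For~(2), $\psi_i\circ\pi$ is a globally meromorphic function on $\mathbb{C}^n$, being a rational function on $G$ pulled back along the globally defined holomorphic map $\pi$; by Poincar\'e's theorem — the second Cousin problem is solvable on $\mathbb{C}^n$ because $H^2(\mathbb{C}^n,\mathbb{Z})=0$ — it is the quotient of two entire functions, equivalently, after fixing a projective embedding of $G$, a ratio of homogeneous polynomials in a common system of entire functions without common zeros. Apart from the branch separation that produces $G$, all the remaining steps are routine.
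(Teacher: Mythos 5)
You take a genuinely different route from the paper --- you aim to build a commutative algebraic group $G$ via Weil's theorem on birational group laws and then pull back $\mathbb{K}(G)$ along the uniformization $\pi:\mathbb{C}^n\to G(\mathbb{C})$ --- whereas the paper never constructs a group: it works entirely with function fields, using Schwarz's translation trick to produce coefficients $A_j(u,v)$ invariant under $(u,v)\mapsto(u+a,v-a)$, a theorem of Bochner to convert that invariance into rationality in $(\Psi(u),\Psi(v))$, and an elementary duplication argument $\psi_i(2u)\in\mathbb{K}(\Psi(u))$ plus Poincar\'e's theorem for the global extension. Your strategy is legitimate and, if completed, would give more (the group structure), but as written it has a genuine gap exactly at the step you yourself flag as ``the main obstacle.''

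The gap is the passage from the algebraic to the rational addition theorem, i.e.\ the claim that there is a variety $V'$ with $\mathbb{K}(V')$ finite over $\mathbb{K}(\phi)$ such that the branch of the addition correspondence determined by the germ $\phi(u+v)$ becomes a rational map $V'\times V'\dashrightarrow V'$. What must be shown is that the finite cover of $V\times V$ needed to split off this branch can be taken of \emph{product} form $V'\times V'$: a priori, the field $\mathbb{K}\bigl(\phi(u),\phi(v),\phi(u+v)\bigr)$ need not be a compositum of a field of functions of $u$ alone and one of $v$ alone, and the minimal cover trivializing the branch could mix the two sets of variables. Your proposed mechanism --- ``the selected branch is the one with a prescribed jet at the identity, which is an algebraic condition separating it from its conjugates,'' supported by the lemma that $\partial_i\phi_j$ is algebraic over $\mathbb{K}(\phi)$ --- distinguishes the branch \emph{pointwise} (indeed analytically, near one point of $V\times V$), but does not produce the required product splitting, nor a rational expression for the branch in terms of functions of $u$ and of $v$ separately. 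Everything downstream (associativity, Weil's theorem, the uniformization $\pi$, and hence parts (1) and (2)) is conditional on this step, which is precisely what the paper's Lemma 3 (Schwarz), Lemma 5 (the field $\mathbb{L}$ generated by the translated $B_j$'s, shown to be finitely generated of transcendence degree $n$) and the appeal to Bochner's theorem in the proof of part (1) are designed to establish. Until you supply an argument for the product splitting --- for instance by reproducing the Schwarz--Bochner mechanism or an equivalent descent --- the proof is incomplete at its central point.
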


\begin{corollary}\label{C2}
Any $\phi \in \mathcalboondox{M}_{\mathbb{K},n}^n$ admitting an AAT is algebraic over $\mathbb{K}(\psi)$ for some 
$\psi \in \mathcalboondox{M}_{\mathbb{K},n}^n$ admitting an AAT and whose coordinate functions are the quotient of two convergent power 
series whose complex domain of convergence is $\mathbb{C}^n$. 
\end{corollary}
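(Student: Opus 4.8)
The plan is to read this off directly from Theorem \ref{T2}. First I would apply the Extension Theorem to the given $\phi$, obtaining $\psi=(\psi_1,\ldots,\psi_n)\in\mathcalboondox{M}_{\mathbb{K},n}^n$ which admits an AAT and is algebraic over $\mathbb{K}(\phi)$ (the auxiliary series $\psi_0$ is not needed for this statement). Part $(2)$ of Theorem \ref{T2} says that each $\psi_i$ is the quotient of two convergent power series whose complex domain of convergence is $\mathbb{C}^n$, so $\psi$ already has all the properties demanded of it in the corollary; the only thing left to verify is that $\phi$ is algebraic over $\mathbb{K}(\psi)$ --- and note that Theorem \ref{T2} supplies the algebraic dependence only in the opposite direction, so a short argument is required here.

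That argument is a transcendence-degree count inside the field $\mathcalboondox{M}_{\mathbb{K},n}$. Since $\phi$ admits an AAT, its coordinates are algebraically independent over $\mathbb{K}$, so $\operatorname{trdeg}_{\mathbb{K}}\mathbb{K}(\phi)=n$; as $\psi$ is algebraic over $\mathbb{K}(\phi)$, the field $\mathbb{K}(\phi,\psi)$ is algebraic over $\mathbb{K}(\phi)$ and hence also has transcendence degree $n$ over $\mathbb{K}$. But $\psi$ admits an AAT too, so $\psi_1,\ldots,\psi_n$ are $n$ algebraically independent elements of $\mathbb{K}(\phi,\psi)$, and therefore form a transcendence basis of $\mathbb{K}(\phi,\psi)$ over $\mathbb{K}$. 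Consequently $\mathbb{K}(\phi,\psi)$, and in particular each $\phi_i$, is algebraic over $\mathbb{K}(\psi)$, which is exactly what is needed.

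I do not expect a genuine obstacle: all the substantive work is encapsulated in Theorem \ref{T2}, and the only mildly delicate point --- reversing the direction of the algebraic dependence between $\phi$ and $\psi$ --- is settled by the elementary transcendence-basis observation above.
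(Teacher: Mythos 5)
Your proposal is correct and follows the same route as the paper: apply Theorem \ref{T2} to get $\psi$ algebraic over $\mathbb{K}(\phi)$, then reverse the dependence using the algebraic independence of $\psi_1,\ldots,\psi_n$. The paper states this reversal in one line; your transcendence-degree count is just the explicit justification of that step.
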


We point out that this theorem gives not only an extension result, but also a uniform rational version of the AAT. 
In fact, given a $\phi$ admitting an AAT, we obtain the rational version in Theorem \ref{T2}\,(1a) through the coefficients of the 
polynomial asssociated to each $\phi _i(u+v)$.
Then, we obtain the extension result of Theorem \ref{T2}\,(2) by considering the rational expression obtained in Theorem \ref{T2}\,(1a).
In particular, this shows that any $\phi$ admitting an AAT can be analytically extended to a multivalued analytic map with a finite number of 
branches.
Thus, we provide a new way of proving Weierstrass' extension result in dimension $1$, whose classical proofs go the other way around
(and do not provide a rational counterpart):
first it is proved the finiteness of the number of branches of the extension of such analytic $\phi$ and then, making use of the coefficients 
of the relevant polynomials, it is given a global univaluated meromorphic function admitting an AAT.

The motivation of the results of this paper is to study abelian locally $\mathbb{K}$-Nash groups, for $\mathbb{K}=\mathbb{R}$ or 
$\mathbb{C}$.
Charts at the identity of such groups admit an AAT. 
Locally Nash groups (i.e. for $\mathbb{K}=\mathbb{R}$) were studied by J.J. Madden and C.M. Stanton \cite{Madden_Stanton} and M. Shiota
\cite{Shiota2}, mainly in dimension $1$. 
In particular, the Extension Theorem will allow us to reduce the study of simply connected abelian locally Nash groups to those whose 
charts are restrictions of (global) meromorphic functions admitting an AAT (see \cite{Baro_deVicente_Otero}).

The results of this paper are part of the second author's Ph.D. dissertation.

\section{The Extension Theorem.}\label{SPreliminaries}\label{SAlgebra}

For each $\epsilon >0$, let $U_{\mathbb{K},n}(\epsilon):=\{ a\in \mathbb{K}^n \suchthat \| a\| <\epsilon \}$.
\emph{We will only consider convergence over open subsets of $\mathbb{C}^n$, let $U_n(\epsilon ):=U_{\mathbb{C},n}(\epsilon)$.}
We say that $(\phi _1,\ldots ,\phi _m) \in \mathcalboondox{M}_{\mathbb{K},n}^m$ is {\itshape convergent} in $U_n(\epsilon )$ if each 
$\phi _1,\ldots ,\phi _m$ is the quotient of two power series convergent on $U_n(\epsilon )$.

As usual, by the identity principle for analytic functions, we identify $\mathcal{O}_{\mathbb{K},n}$ with the ring of germs of analytic 
functions at $0$, and $\mathcalboondox{M}_{\mathbb{K},n}$ with its quotient field.
We will use without mention properties of $\mathcal{O}_{\mathbb{K},n}$, see e.g. R.C. Gunning and H. Rossi \cite{Gunning_Rossi} and 
J.M. Ruiz \cite{Ruiz}.

Let $\epsilon >0$.
Let $\phi :=(\phi _1,\ldots ,\phi _m)\in \mathcalboondox{M}_{\mathbb{K},n}^m$ be convergent on $U_n(\epsilon )$, let 
$a\in U_{\mathbb{K},n}(\epsilon )$ and let $(u,v):=(u_1,\ldots ,u_n,v_1,\ldots ,v_n)$ be a $2n$-tuple of variables. 
We will use the following notation:

\[
\begin{array}{l}
\phi _{(u,v)}:=\big(\phi _1(u),\ldots ,\phi _m(u), \phi _1(v),\ldots ,\phi _m(v)\big)
\in \mathcalboondox{M}_{\mathbb{K},2n}^{2m}.\\
\phi _{u+v}:=\big(\phi _1(u+v),\ldots ,\phi _m(u+v)\big)
\in \mathcalboondox{M}_{\mathbb{K},2n}^m.\\
\phi _{u+a}:=\big(\phi _1(u+a),\ldots ,\phi _m(u+a)\big)
\in \mathcalboondox{M}_{\mathbb{K},n}^m.
\end{array}
\]\label{AAT_equivalence}

Given $\phi \in \mathcalboondox{M}_{\mathbb{K},p}^{n}$ and $\psi \in \mathcalboondox{M}_{\mathbb{K},p}^{m}$ we say that the tuple 
$\phi$ is {\itshape algebraic} over $\mathbb{K}(\psi):=\mathbb{K}(\psi _1 , \ldots ,\psi _m)$ if each component, 
$\phi _1,\ldots ,\phi _n$, is algebraic over $\mathbb{K}(\psi)$.

\begin{center}
\emph{Thus, $\phi \in \mathcalboondox{M}_{\mathbb{K},n}^n$ admits an \emph{algebraic addition theorem (AAT)} if 
$\phi _1,\ldots ,\phi _n$ are algebraically independent over $\mathbb{K}$ and $\phi _{u+v}$ is algebraic over 
$\mathbb{K}(\phi _{(u,v)})$.}
\end{center}

Note that if $\phi \in \mathcalboondox{M}_{\mathbb{R},n}$ admits an AAT then $\phi$ also admits an AAT when considered as an element 
of $\mathcalboondox{M}_{\mathbb{C},n}$.\label{AAT_equivalence2}

We first prove two properties of maps admitting an AAT.

\begin{lemma}\label{algebraic} 
Let $\epsilon >0$ and let $\phi \in \mathcalboondox{M}_{\mathbb{K},n}^n$ be convergent on $U_n(\epsilon)$.
If $\phi$ admits an AAT then $\phi _{u+a}$ is algebraic over $\mathbb{K}(\phi)$, for each 
$a\in U_{\mathbb{K},n}(\epsilon)$.
\end{lemma}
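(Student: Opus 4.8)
The plan is to exploit the AAT together with the fact that $a$ lies in the domain of convergence, so that the "specialization" $v \mapsto a$ makes sense as a substitution into convergent power series. First I would fix $a \in U_{\mathbb{K},n}(\epsilon)$ and consider, for each $i$, the polynomial relation witnessing the AAT: there is a nonzero polynomial $P_i$ over $\mathbb{K}$ with
\[
P_i\big(\phi_i(u+v);\,\phi_1(u),\ldots,\phi_n(u),\phi_1(v),\ldots,\phi_n(v)\big)=0
\]
as an identity in $\mathcalboondox{M}_{\mathbb{K},2n}$. The idea is to substitute $v=a$. Since $\phi$ is convergent on $U_n(\epsilon)$ and $a\in U_{\mathbb{K},n}(\epsilon)$, each $\phi_j(v)$ specializes at $v=a$ to an element of $\mathbb{K}$ (namely the value $\phi_j(a)$, well-defined after clearing denominators on a neighborhood of $a$), and $\phi_i(u+v)$ specializes to $\phi_i(u+a)=\phi_{i,u+a}$, again a well-defined element of $\mathcalboondox{M}_{\mathbb{K},n}$ because $u+a$ stays in $U_n(\epsilon)$ for $u$ near $0$. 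Carrying out this substitution in the identity above yields
\[
P_i\big(\phi_i(u+a);\,\phi_1(u),\ldots,\phi_n(u),\phi_1(a),\ldots,\phi_n(a)\big)=0,
\]
which, viewed as a polynomial in its first argument with coefficients in $\mathbb{K}(\phi_1(u),\ldots,\phi_n(u))=\mathbb{K}(\phi)$, exhibits $\phi_{i,u+a}$ as algebraic over $\mathbb{K}(\phi)$ — provided the specialized polynomial is not identically zero.

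The main obstacle is exactly that last proviso: a priori, substituting $v=a$ could kill every coefficient of $P_i$ (in the first variable) or even cause a denominator to vanish, so one must argue that $P_i$ can be chosen to survive specialization. To handle this I would first clear denominators and write $P_i$ with coefficients in $\mathcal{O}_{\mathbb{K},2n}$ rather than in the fraction field, and then take $P_i$ to be, say, the minimal polynomial of $\phi_i(u+v)$ over the field $\mathbb{K}(\phi_{(u,v)})$, multiplied through by a common denominator lying in $\mathcal{O}_{\mathbb{K},n}$ in the $v$-variables alone. The key point is that the leading coefficient of the minimal polynomial is nonzero as an element of $\mathcalboondox{M}_{\mathbb{K},2n}$, and a nonzero element of $\mathcalboondox{M}_{\mathbb{K},2n}$ convergent near $(0,a)$ cannot vanish identically on the slice $v=a$ unless we are unlucky with the choice of $a$; to get around even that, one uses that the set of $v$ for which the slice degenerates is a proper analytic subset, so after a harmless preliminary translation (or by choosing the denominator generically) one may assume $a$ avoids it. Alternatively, and more cleanly, one argues directly: the field extension $\mathbb{K}(\phi_{(u,v)}) \subset \mathbb{K}(\phi_{(u,v)},\phi_{u+v})$ is finite, hence so is $\mathbb{K}(\phi_{(u,v)}) \subset \mathbb{K}(\phi_{(u,v)},\phi_{u+v})$ after specializing the $v$-coordinates to a point where all the finitely many relevant power series (numerators, denominators, leading coefficients, discriminants) are defined and the leading terms are nonzero — such points form a dense open set, so in particular one can reach any prescribed $a$ by first noting the statement is stable under the translation $u \mapsto u - a'$.

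Finally I would remark that algebraicity is transitive and that $\mathbb{K}(\phi_{(u,v)})$ restricted to the slice $v=a$ is contained in $\mathbb{K}(\phi)$ (the values $\phi_j(a)\in\mathbb{K}$ contribute nothing new), so the relation displayed above genuinely witnesses "$\phi_{i,u+a}$ algebraic over $\mathbb{K}(\phi)$" for every $i$, which is the assertion of the lemma. I expect the write-up to be short once the specialization is justified; the only delicate bookkeeping is ensuring the chosen defining polynomial does not collapse under $v=a$, and I would dispatch that either by the genericity-plus-translation argument or by invoking that the minimal polynomial's non-vanishing leading coefficient defines a proper analytic set missing a suitable translate of the problem.
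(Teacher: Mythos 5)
Your proposal correctly isolates the crux: the AAT gives a nonzero polynomial $P$ with $P(\phi(u),\phi(v);\phi_j(u+v))=0$, and the only difficulty is that the specialization $v=a$ may kill $P(\phi(u),\phi(a);Y)$ (or $\phi(a)$ may not even be a tuple of scalars). From there you take a genuinely different route from the paper. The paper follows Bochner--Martin: it shows the set $U$ of good points is open and dense, normalizes the coefficient vector of the specialized polynomial $P_a$ to lie on the unit sphere for $a\in U$, takes a sequence of good points $a_k\to a$ converging to a bad $a$, extracts a convergent subsequence of coefficient vectors by compactness of the sphere, and passes to the limit in the denominator-cleared identity; the limit polynomial is nonzero because its coefficient vector still has norm $1$. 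Your route is instead genericity plus a decomposition of the translation: for a bad $a$, write $a=a'+(a-a')$ with both summands generic, apply the generic case twice, and use transitivity of algebraicity. This is viable and arguably more elementary, but the step you assert as ``the statement is stable under the translation $u\mapsto u-a'$'' is exactly where the remaining work lives, and you leave it unargued.

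Concretely, two points must be supplied there. First, you need $a'$ with both $a'$ and $a-a'$ in the dense open good set and inside $U_{\mathbb{K},n}(\epsilon)$; this follows by intersecting $U$ with $a-U$ near $a/2$, but it should be said. Second, and more importantly, the second specialization gives the relation $P\bigl(\phi(u+a'),\phi(a-a');\phi(u+a)\bigr)=0$, and to conclude that $\phi_j(u+a)$ is algebraic over $\mathbb{K}(\phi_{u+a'})$ you must know that $P(X,\phi(a-a');Y)$ does not become identically zero once its $X$-variables are evaluated at $\phi_1(u+a'),\ldots,\phi_n(u+a')$ --- i.e., that these translated coordinates are still algebraically independent over $\mathbb{K}$. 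This is true (a polynomial relation for $\phi_{u+a'}$ propagates to one for $\phi$ by the identity principle; the paper proves exactly this fact for $\phi_{u+b}$ inside the proof of Lemma \ref{K(Psi)}), but it is not automatic and your sketch does not mention it. With those two points filled in, your argument closes and would constitute a legitimate alternative to the paper's limit argument; as written it stops just short of a proof.
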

\begin{proof}
Fix $j\in \{1,\ldots ,n\}$ and let $f(u,v):=\phi _j(u+v)$.
By hypothesis, there exists $P\in \mathbb{K}[X_1,\ldots ,X_{2n}][Y]$ such that 
$P(\phi (u),\phi (v);Y)\neq 0$ and $P(\phi (u),\phi (v);f(u,v))=0$.
For any $a\in U_{\mathbb{K},n}(\epsilon)$ such that $P(\phi (u),\phi (a),Y)$ is not identically zero, 
we clearly obtain that $f(u,a)$ is algebraic over $\mathbb{K}(\phi)$.
We have to consider those $a\in U_{\mathbb{K},n}(\epsilon)$ such that $P(\phi (u),\phi (a);Y)$ is identically zero.

We first check that there exists an open dense subset $U$ of $U_{\mathbb{K},n}(\epsilon )$ such that for each 
$a\in U$, $P(X_1,\ldots ,X_n,\phi (a);Y)\in \mathbb{K}[X_1,\ldots ,X_n][Y]$ is a non-zero polynomial.
Let $W$ be an open dense subset of $U_{\mathbb{K},n}(\epsilon )$ such that
\[
W\subset \{  a\in U_{\mathbb{K},n}(\epsilon )\suchthat \phi (a)\in \mathbb{K}^n \}
\]
and $\phi :W\rightarrow \mathbb{K}^n$ is analytic.
Let 
\[
U:=\{ a\in W \suchthat P(X_1,\ldots ,X_n,\phi (a);Y) \neq 0\}. 
\]
Since $W$ is an open dense subset of $U_{\mathbb{K},n}(\epsilon )$, it is enough to show that $W\setminus U$ is closed 
and nowhere dense in $W$.
Clearly $W\setminus U$ is closed in $W$ because $\phi$ is continuous in $W$.
To prove the density, we note that if $W\setminus U$ contains an open subset of $W$ then
\[
\{ a\in U_{\mathbb{K},n}(\epsilon )\suchthat P(\phi (u),\phi (a);Y)\in \mathcalboondox{M}_{\mathbb{K},n+1} \text{ and } 
P(\phi (u),\phi (a);Y)=0\}
\]
contains an open subset of $U_{\mathbb{K},n}(\epsilon )$ and therefore $P(\phi (u),\phi (v);Y)=0$, a contradiction.

To finish the proof we will show that for each $a\in U_{\mathbb{K},n}(\epsilon )$, there exists 
$Q_{a}\in \mathbb{K}[X_1,\ldots ,X_n][Y]$ such that $Q_{a} (\phi (u);Y)$ is not identically zero and  
$Q_{a} (\phi (u);f(u,a))=0$.
We follow the proof of \cite[Ch. IX. \textsection 5. Theorem 5]{Bochner_Martin}.
For each $a\in U$, where $U$ is as above, let
\[
P_{a}(X_1,\ldots ,X_n;Y)=\sum _{i,\mu \leq N} b_{i,\mu ,a} \ X_1^{\mu _1}\ldots X_n^{\mu _n}Y ^i
\]
denote the polynomial $P(X_1,\ldots ,X_n,\phi (a);Y)$.
We have that $U$ is dense in $U_{\mathbb{K},n}(\epsilon )$ and $P_{a}\neq 0$ for all $a\in U$.
For each $a\in U$, we define
\[
E(P_{a}):= \sum _{i,\mu \leq N} \| b_{i,\mu ,a} \| ^2.
\]
We note that $E(P_{a})>0$, for all $a\in U$.
For each $a\in U$, let 
\[
Q_{a}(X_1,\ldots ,X_n;Y):=\sum _{i,\mu \leq N} c_{i,\mu ,a} \ X_1^{\mu _1}\ldots X_n^{\mu _n}Y ^i,
\]
where
\[
c_{i,\mu ,a}:=\frac{b_{i,\mu ,a}}{\sqrt{E(P_{a})}}. 
\]
Hence, for each $a\in U$, we have that $Q_{a}(\phi (u);Y)$ is not identically zero, $Q_{a}(\phi (u);f(u,a))=0$ and $E(Q_{a})=1$.
We define
\[
\vec{v}(a):=(c_{i,\mu ,a})_{i,\mu \leq N}\in \{ z\in \mathbb{K}^{(N+1)^{(n+1)}}\suchthat \|z\|=1 \}.
\]
Take $a\in U_{\mathbb{K},n}(\epsilon)\setminus U$.
Since $U$ is an open dense subset of $U_{\mathbb{K},n}(\epsilon )$, there exists a sequence $\{a_k\}_{k \in \mathbb{N}}\subset U$ 
that converges to $a$.
For each $a_k$, the identity $Q_{a_k}(\phi (u) ;f(u,a_k))=0$ holds, therefore
\[
\sum _{i,\mu \leq N} c_{i,\mu ,a_k}\phi _1( u) ^{\mu _1}\ldots \phi _n(u)^{\mu _n} f (u,a_k)^i=0.
\]
By hypothesis there are $\alpha ,\beta \in \mathcal{O}_{\mathbb{K},2n}$, $\beta \neq 0$, convergent on 
$U_{2n}(\epsilon)$, such that $f(u,v) =\frac{\alpha (u,v)}{\beta (u,v)}$ and $\beta (u,a)\neq 0$ for 
all $a\in U_{\mathbb{K},n}(\epsilon )$.
In particular
\begin{equation}
\sum _{i,\mu \leq N} c_{i,\mu ,a_k}\phi _1(u) ^{\mu _1}\ldots \phi _n(u)^{\mu _n} 
\alpha (u,a_k)^i\beta (u,a_k)^{N-i}=0.\label{main}
\end{equation}
Since $\{ z\in \mathbb{K}^{(N+1)^{(n+1)}}\suchthat \| z\|=1 \}$ is compact, taking a suitable subsequence we can assume that the 
sequence $\{\vec{v}(a_k)\}_{k\in \mathbb{N}}$ is convergent.
For each $i,\mu \leq N$, we define
\[
c_{i,\mu ,a}:=\lim _{k \rightarrow \infty} \ c_{i,\mu ,a_k}.
\]
Since $\alpha$ and $\beta$ are continuous, when $k$ tends to infinity equation (\ref{main}) becomes
\[
\sum _{i,\mu \leq N} c_{i,\mu ,a}\phi _1(u) ^{\mu _1}\ldots \phi _n(u)^{\mu _n} 
\alpha (u,a)^i\beta (u,a)^{N-i}=0.
\]
So dividing by $\beta (u,a)^N$, we also have
\[
\sum _{i,\mu \leq N} c_{i,\mu ,a}\phi _1(u) ^{\mu _1}\ldots \phi _n(u)^{\mu _n} f(u,a)^i=0
\]
and hence the polynomial
\[
Q_a(X_1,\ldots ,X_n;Y):= \sum _{i,\mu \leq N} c_{i,\mu ,a}X_1 ^{\mu _1}\ldots X_n^{\mu _n} Y^i
\]
satisfies $Q_a(\phi (u) ,f(u,a))=0$.
We note that $E(Q_a)=\lim _{k\rightarrow \infty} E(Q_{a_k}) =1$, so $Q_a\neq 0$.
Since $\phi _1,\ldots ,\phi _n$ are algebraically independent over $\mathbb{K}$ and $Q_a(X_1,\ldots ,X_n,Y)\neq 0$, we have 
$Q_a(\phi (u), Y)$ is not identically zero.
\end{proof}

\begin{lemma}\label{elimination}
Let $\phi ,\psi \in \mathcalboondox{M}_{\mathbb{K},n}^n$ and suppose that $\phi$ is algebraic over $\mathbb{K}(\psi )$.
If $\phi$ admits an AAT then $\psi$ admits an AAT.
The converse is also true, provided $\phi_1,\ldots ,\phi _n$ are algebraically independent over $\mathbb{K}$.
\end{lemma}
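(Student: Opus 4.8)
The plan is to derive both implications from a single statement about transferring an AAT along algebraic substitutions, together with a transcendence-degree count that makes the hypotheses symmetric in $\phi$ and $\psi$.

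First I would record the basic tool. For $\sigma\in\{u,\,v,\,u+v\}$, substitution $x\mapsto\sigma$ (with $x=(x_1,\dots,x_n)$) defines an injective field homomorphism $\mathcalboondox{M}_{\mathbb{K},n}\to\mathcalboondox{M}_{\mathbb{K},2n}$: it is a well-defined ring homomorphism on $\mathcal{O}_{\mathbb{K},n}$ because each $\sigma$ is a continuous linear map fixing the origin, and it extends to the fraction fields because denominators stay nonzero, since specialising $v=0$ sends $h(u+v)$ to $h(u)$ (and $h(u)$, resp.\ $h(v)$, vanishes only if $h=0$). Being a $\mathbb{K}$-algebra homomorphism of fields, $\sigma$ maps the minimal polynomial of any element over any intermediate field to a monic polynomial over the subfield generated by the images; hence \emph{algebraicity over a subfield is preserved}: if $c$ is algebraic over $\mathbb{K}(d_1,\dots,d_m)$ then $c(\sigma)$ is algebraic over $\mathbb{K}\big(d_1(\sigma),\dots,d_m(\sigma)\big)$. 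I shall also use that algebraic-over-algebraic is algebraic.

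Next I would prove the symmetrising step: if $A,B\in\mathcalboondox{M}_{\mathbb{K},n}^n$, $A$ is algebraic over $\mathbb{K}(B)$, and $A_1,\dots,A_n$ are algebraically independent over $\mathbb{K}$, then $B_1,\dots,B_n$ are algebraically independent over $\mathbb{K}$ and $B$ is algebraic over $\mathbb{K}(A)$. Indeed $\operatorname{trdeg}_{\mathbb{K}}\mathbb{K}(B)\le n$; since $A$ is algebraic over $\mathbb{K}(B)$ we get $\operatorname{trdeg}_{\mathbb{K}}\mathbb{K}(A,B)=\operatorname{trdeg}_{\mathbb{K}}\mathbb{K}(B)$; and $\operatorname{trdeg}_{\mathbb{K}}\mathbb{K}(A)=n$ gives $\operatorname{trdeg}_{\mathbb{K}}\mathbb{K}(A,B)\ge n$. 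So both are $n$, whence $B$ has algebraically independent components and $\mathbb{K}(A,B)$ is algebraic over $\mathbb{K}(A)$, i.e.\ $B$ is algebraic over $\mathbb{K}(A)$. Using this, the core statement is: \emph{if $A$ admits an AAT, $B$ is algebraic over $\mathbb{K}(A)$, and $B_1,\dots,B_n$ are algebraically independent, then $B$ admits an AAT.} By the symmetrising step $A$ is also algebraic over $\mathbb{K}(B)$, so one chains, in this order: $B_{u+v}$ is algebraic over $\mathbb{K}(A_{u+v})$ (apply $\sigma=u+v$ to ``$B$ algebraic over $\mathbb{K}(A)$''); $A_{u+v}$ is algebraic over $\mathbb{K}(A_{(u,v)})$ (the AAT of $A$); $A_{(u,v)}$ is algebraic over $\mathbb{K}(B_{(u,v)})$ (apply $\sigma=u$ and $\sigma=v$ to ``$A$ algebraic over $\mathbb{K}(B)$''); and transitivity of algebraicity then yields $B_{u+v}$ algebraic over $\mathbb{K}(B_{(u,v)})$. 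Together with the assumed algebraic independence of $B$ this is precisely ``$B$ admits an AAT''.

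The lemma follows immediately. For the first assertion, $\phi$ admits an AAT and is algebraic over $\mathbb{K}(\psi)$; applying the symmetrising step with $A=\phi$, $B=\psi$ shows $\psi_1,\dots,\psi_n$ are algebraically independent and $\psi$ is algebraic over $\mathbb{K}(\phi)$, and then the core statement with $A=\phi$, $B=\psi$ gives that $\psi$ admits an AAT. For the converse, $\psi$ admits an AAT, $\phi$ is algebraic over $\mathbb{K}(\psi)$ (hypothesis), and $\phi_1,\dots,\phi_n$ are algebraically independent (the extra assumption), so the core statement with $A=\psi$, $B=\phi$ applies directly and $\phi$ admits an AAT. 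I do not expect a real obstacle: the only points needing care are the well-definedness of the substitution maps on germs of \emph{meromorphic} functions (handled by the $v=0$ specialisation) and keeping track of which tuple is algebraic over which in each direction, which is exactly what the transcendence-degree step makes symmetric.
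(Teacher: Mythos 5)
Your proposal is correct and follows essentially the same route as the paper: a transcendence-degree count to make the algebraicity hypothesis symmetric in $\phi$ and $\psi$, followed by the chain $\psi_{u+v}$ algebraic over $\mathbb{K}(\phi_{u+v})$, $\phi_{u+v}$ algebraic over $\mathbb{K}(\phi_{(u,v)})$, and $\phi_{(u,v)}$ algebraic over $\mathbb{K}(\psi_{(u,v)})$, with transitivity of algebraicity. You merely spell out in more detail the substitution homomorphisms and the symmetrising step that the paper treats as immediate.
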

\begin{proof}
Assume that $\phi$ admits an AAT, hence $\psi _1,\ldots ,\psi _n$ are algebraically independent over $\mathbb{K}$ because 
$\phi$ is algebraic over 
$\mathbb{K}(\psi )$.
To check that $\psi _{ u+ v}$ is algebraic over $\mathbb{K}(\psi _{(u,v)})$ it is enough to show that 
$\psi _{ u+ v}$ is algebraic over $\mathbb{K}(\phi _{ u+ v})$, $\phi _{ u+ v}$ is algebraic over 
$\mathbb{K}(\phi _{(u,v)})$ and $\phi _{(u,v)}$ is algebraic over $\mathbb{K}(\psi _{(u,v)})$.
The three conditions above are trivially satisfied because $\phi$ admits an AAT and both $\phi$ is algebraic over 
$\mathbb{K}(\psi)$ and $\psi$ is algebraic over $\mathbb{K}(\phi)$.
The converse follows by symmetry because if $\phi _1,\ldots ,\phi _n$ are algebraically independent over $\mathbb{K}$ then 
$\psi$ is algebraic over $\mathbb{K}(\phi )$.
\end{proof}

Now, we adapt to our context a result on AAT due to H.A.Schwarz, see \cite[Ch. XXI. Art. 389]{Hancock} for details.

\begin{lemma}\label{Schwarz}
Let $\epsilon >0$ and let $\phi \in \mathcalboondox{M}_{\mathbb{K},n}^n$ be convergent on $U_n(\epsilon )$ such that it admits an AAT.
Then, there exist a finite subset $\mathcal{C}\subset U_{\mathbb{K},n}(\epsilon)$, with $0\in \mathcal{C}$ and 
$\mathcal{C}=-\mathcal{C}$, and $\epsilon ' \in (0, \epsilon]$ satisfying: each element of 
$\mathbb{K}(\phi _{u+a} \suchthat a\in \mathcal{C})$ is convergent on $U_n(2\epsilon ')$,
and there exist $A_0,\ldots ,A_N\in \mathbb{K}(\phi _{(u+a,v+a)} \suchthat a\in \mathcal{C})$
convergent on $U_{2n}(2\epsilon ')$ such that $\phi _{u+v}$ is algebraic over $\mathbb{K}(A_0,\ldots , A_N)$ 
and, for each $j\in \{0,\ldots ,N\}$, 
\begin{equation}
A_j(u,v)=A_j(u+a,v-a),\text{ for all } a\in U_{\mathbb{K},n}(\epsilon ').\label{dagger}
\end{equation}
\end{lemma}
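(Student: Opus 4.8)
The plan is to start from a single algebraic relation for $\phi_{u+v}$ and then symmetrize it with respect to the group of translations $a\mapsto(u+a,v-a)$, which preserves the sum $u+v$. Fix $j$ and, as in the proof of Lemma~\ref{algebraic}, pick $P_j\in\mathbb{K}[X_1,\dots,X_{2n}][Y]$ with $P_j(\phi_{(u,v)};Y)\neq 0$ and $P_j(\phi_{(u,v)};\phi_j(u+v))=0$. Replacing $u,v$ by $u+a,v-a$ leaves $\phi_j(u+v)$ unchanged, so each of the finitely many coefficients $b_{i,\mu}(u,v)\in\mathbb{K}(\phi_{(u,v)})$ of $P_j$, evaluated at $(u+a,v-a)$, gives a new coefficient, and $\phi_j(u+v)$ is a root of the resulting polynomial $P_j(\phi_{(u+a,v-a)};Y)$ as well, for every small $a$. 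The idea is then to form symmetric functions: for a suitable finite set $\mathcal{C}$ of translations $a$, build the coefficients $A_0,\dots,A_N$ out of the elementary symmetric functions (or sums of products) of the quantities $\{\,b_{i,\mu}(u+a,v-a): a\in\mathcal{C}\,\}$. By construction these $A_j$ lie in $\mathbb{K}(\phi_{(u+a,v+a)}\mid a\in\mathcal C)$ and are invariant under $a\mapsto(u+a,v-a)$ for $a$ in a neighborhood of $0$; and since the original $P_j$ still has $\phi_j(u+v)$ as a root after translation, $\phi_j(u+v)$ remains algebraic over $\mathbb{K}(A_0,\dots,A_N)$. Doing this simultaneously for $j=1,\dots,n$ and collecting all the $A_j$ arising yields the list $A_0,\dots,A_N$ of the statement. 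We take $\mathcal C$ to be closed under negation and to contain $0$ by symmetrizing the chosen translation set; this is harmless.

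The second ingredient is convergence. Each $\phi_i$ is convergent on $U_n(\epsilon)$, so each $\phi_i(u+a)$ is convergent on the translate $\{u:\|u+a\|<\epsilon\}$, which contains $U_n(\epsilon-\|a\|)$; choosing $\epsilon'\in(0,\epsilon]$ small enough that $U_n(2\epsilon')\subset\bigcap_{a\in\mathcal C}\{u:\|u+a\|<\epsilon\}$ makes every $\phi_i(u+a)$, $a\in\mathcal C$, convergent on $U_n(2\epsilon')$; since $\mathcal C$ is finite and $0\in\mathcal C$, such an $\epsilon'$ exists. The field $\mathbb{K}(\phi_{u+a}\mid a\in\mathcal C)$ is generated by finitely many series convergent on $U_n(2\epsilon')$, hence each of its elements is too (a quotient of such, with nonzero denominator). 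The same argument in $2n$ variables, after possibly shrinking $\epsilon'$ again, gives convergence of the $A_j$ on $U_{2n}(2\epsilon')$. Here one must also ensure the denominators appearing in the $A_j$ are not identically zero on $U_{2n}(2\epsilon')$, which follows from the identity principle since they are nonzero as germs at $0$.

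The main obstacle I expect is the invariance property~\eqref{dagger}: one must arrange the $A_j$ so that $A_j(u,v)=A_j(u+a,v-a)$ holds \emph{identically} (as meromorphic series) for all $a$ in a fixed neighborhood $U_{\mathbb{K},n}(\epsilon')$, not merely for one $a$ at a time or only formally. The point is that the map $a\mapsto(u+a,v-a)$ defines, for each fixed small real $a$, an automorphism of $\mathcalboondox{M}_{\mathbb{K},2n}$ (composition with a translation), and the collection $\{b_{i,\mu}(u+a,v-a):a\in\mathcal C\}$ must be genuinely permuted by this automorphism for $a$ ranging over a neighborhood — which is why $\mathcal C$ should be taken to be (the intersection with $U_{\mathbb K,n}(\epsilon)$ of) a set stable under the relevant translations, or, more practically, why one symmetrizes over the \emph{orbit}. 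Concretely: enlarge the finite set of "base" translations to its closure under $\mathcal C\ni a\mapsto a+a'$ for $a'$ small, but since that need not be finite, the correct move is the classical one of Schwarz — take $\mathcal C$ finite, form the $A_j$ as symmetric functions over $\mathcal C$, and verify directly that for small $a'$ the substitution $(u,v)\mapsto(u+a',v-a')$ maps the generating set $\{\phi_{u+a}(u+a',\cdot):a\in\mathcal C\}$ bijectively onto $\{\phi_{u+a'+a}:a\in\mathcal C\}$ only up to the ambiguity that $\mathcal C+a'\ne\mathcal C$. Resolving this cleanly is the heart of the matter: one fixes it by choosing $\epsilon'$ so small and $\mathcal C$ so that $A_j$ is in fact constructed to be invariant, e.g. by averaging over a continuum via an integral or, in the algebraic setting, by passing to the subfield of $\mathbb{K}(\phi_{(u+a,v+a)}\mid a\in\mathcal C)$ fixed by the (pro-)group of small translations and checking $\phi_{u+v}$ stays algebraic over it. I would carry out the orbit/symmetrization bookkeeping carefully, then deduce~\eqref{dagger}, and finish with the convergence estimates, which are routine once $\epsilon'$ is fixed.
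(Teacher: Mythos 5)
You have correctly located the crux — arranging that \eqref{dagger} holds identically for \emph{all} $a$ in a neighborhood, not just for the finitely many translates used to build the $A_j$ — but your proposal does not actually resolve it, and the symmetrization you describe cannot. Elementary symmetric functions of $\{b_{i,\mu}(u+a,v-a) : a\in\mathcal{C}\}$ over a finite $\mathcal{C}$ are invariant only under substitutions that permute that finite set; the substitution $(u,v)\mapsto(u+a',v-a')$ for generic small $a'$ carries it to $\{b_{i,\mu}(u+a+a',v-a-a')\}$, a different set, exactly as you observe. Your two escape routes — averaging by an integral, or passing to the fixed field of the group of small translations — are not carried out, and the second one is essentially a restatement of the lemma (one would still have to show $\phi_{u+v}$ is algebraic over that fixed field, which is the whole point).

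The missing idea is Schwarz's descent on the degree of the \emph{minimal} polynomial. Fix $i$ and let $P_0$ be the minimal polynomial of $\phi_i(u+v)$ over $\mathbb{K}_0:=\mathbb{K}(\phi_{(u,v)})$, normalized to be monic. If every coefficient already satisfies \eqref{dagger} for some $\epsilon'$, you are done. Otherwise there is a small $a_1$ for which $Q_0(X):=P_0(X)-P_0^{(a_1)}(X)$ (the difference with the polynomial obtained by substituting $(u+a_1,v-a_1)$) is nonzero; since $u+v=(u+a_1)+(v-a_1)$, the series $\phi_i(u+v)$ is still a root of $Q_0$, and because both polynomials are monic of the same degree, $\deg Q_0<\deg P_0$. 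Hence the minimal polynomial $P_1$ of $\phi_i(u+v)$ over the enlarged field $\mathbb{K}_1:=\mathbb{K}(\phi_{(u+a,v+a)}\mid a\in\{0,a_1,-a_1\})$ has strictly smaller degree. Iterating, the degree strictly decreases at each failure of invariance, so after finitely many steps the coefficients of the minimal polynomial over the current field must all satisfy \eqref{dagger}: any failure would produce a nonzero lower-degree annihilating polynomial, contradicting minimality. The finite set $\mathcal{C}$ and the shrinking radius $\epsilon'=2^{-k}\epsilon$ are generated by this process rather than chosen in advance; your convergence discussion is fine and matches what is needed once $\mathcal{C}$ and $\epsilon'$ are so produced.
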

\begin{proof}
Fix $i\in \{1,\ldots ,n\}$.
Let $\mathcal{S}_0:=\{ 0\}$ and $\mathbb{K}_0:=\mathbb{K}(\phi _{(u,v)})$.
Let
\[
P_0(X)=X^{\ell _0+1}+\sum _{j=0}^{\ell _0} A_{0,j}( u, v) X^j
\]
be the minimal polynomial of $\phi _i(u+v)$ over $\mathbb{K}_0$.
If each $A_{0,j}$ satisfies equation (\ref{dagger}) for $\epsilon '=2^{-1}\epsilon$ then we are done for this $i$ letting 
$\epsilon ':=2^{-1}\epsilon$, $\mathcal{C}:=\mathcal{S}_0$ and $A_j:=A_{0,j}$, for each $0\leq j \leq \ell _0$.
Otherwise, there exists $a_1\in U_{\mathbb{K},n}(2^{-1}\epsilon )$ such that
\[
Q_0(X):= X^{\ell _0+1}+\sum _{j=0}^{\ell _0} A_{0,j}( u, v)X^j - 
X^{\ell _0+1}-\sum _{j=0}^{\ell _0} A_{0,j}(u+a_1,v-a_1)X^j 
\]
is not zero.
Since $u+v=(u+a_1)+(v-a_1)$, we deduce that $\phi _i(u+v)$ is a root of $Q_0(X)$.
Let $\mathcal{S}_1:=\mathcal{S}_0\cup \{a_1,-a_1\}$ and 
$\mathbb{K}_1:=\mathbb{K}(\phi _{u+a,v+a} \suchthat a\in \mathcal{S}_1)$.
By definition $\mathbb{K}_0\subset \mathbb{K}_1$.
Let 
\[
P_1(X)=X^{\ell _1+1}+\sum _{j=0}^{\ell _1} A_{1,j}(u,v) X^j
\]
be the minimal polynomial of $\phi _i(u+v)$ over $\mathbb{K}_1$.
We note that the elements of $\mathbb{K}_1$ are convergent on $U_{2n}(2^{-1}\epsilon )$.
If each $A_{1,j}$ satisfies equation (\ref{dagger}) for $\epsilon '=2^{-2}\epsilon$ then we are done for this $i$ letting 
$\epsilon ':=2^{-2}\epsilon$, $\mathcal{C}:=\mathcal{S}_1$ and $A_j:=A_{1,j}$, for each $0\leq j \leq \ell _1$.
Otherwise, we can repeat the process to obtain sets $\mathcal{S}_2$, $\mathcal{S}_3$ and so on, where the set 
$\mathcal{S}_k$ is obtained from the set $\mathcal{S}_{k-1}$ as
\[
\mathcal{S}_k:=\mathcal{S}_{k-1}\cup \{a+a_k \suchthat a\in \mathcal{S}_{k-1}\}\cup 
\{a-a_k \suchthat a\in \mathcal{S}_{k-1}\},
\]
for some $a_k\in U_{\mathbb{K},n}(2 ^{-k}\epsilon )$ such that $Q_{k-1}$ is not $0$. 
Similarly, we obtain $\mathbb{K}_k:=\mathbb{K}(\phi_{u+a,v+a} \suchthat a\in \mathcal{S}_k)$ whose elements are convergent on 
$U_{2n}(2^{-k}\epsilon)$.
Since in the $k$ repetition the degree of $P_k$ is smaller than that of $P_{k-1}$, this process eventually stops, say at step $s$.
Letting $\epsilon ':=2^{-s-1}\epsilon$, $\mathcal{C}:=\mathcal{S}_s$ and $A_j:=A_{s,j}$, for each $0\leq j \leq \ell _s$, 
we are done for this $i$.
The elements $A_0,\ldots ,A_{\ell _s}$ are convergent on $U_{2n}(2\epsilon ')$ because they are elements of $\mathbb{K}_s$.

For each $i$, $1\leq i \leq n$, denote by $\epsilon '_i$, $\mathcal{C}_i$ and $A_{0}^{i},\ldots ,A_{N_i}^{i}$ the elements 
$\epsilon '$, $\mathcal{C}$ and $A_{1},\ldots ,A_{\ell _s}$ previously obtained for that choice of $i$.
To complete the proof, take $\mathcal{C}:=\bigcup _i \mathcal{C}_i$, $\epsilon ' :=\min _i \{ \epsilon '_i\}$, and let 
$\{A_0,\ldots ,A_N\}$ be the union of the sets $\{A_0^{i},\ldots ,A_{N_i}^{i}\}$.
\end{proof}

We need two additional lemmas before proving the Extension Theorem.

\begin{lemma}\label{minus}
Let $\phi \in \mathcalboondox{M}_{\mathbb{K},n}^n$ admit an AAT. 
Then, $\phi (-u)$ is algebraic over $\mathbb{K}(\phi (u))$.
\end{lemma}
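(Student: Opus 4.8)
The plan is to derive from the AAT an algebraic relation that \emph{solves for} $\phi_j(v)$ in terms of $\phi(u)$ and $\phi(u+v)$, to specialise the ``$u+v$''--argument to a suitable constant, and then to apply Lemma~\ref{algebraic} to the map $u\mapsto\phi(-u)$. Since $t\mapsto -t$ induces an automorphism of $\mathcalboondox{M}_{\mathbb{K},n}$, it suffices to show that $\phi(t)$ is algebraic over $\mathbb{K}(\phi(-t))$. I would first record two facts. \emph{(i)} The tuple $\rho:=\big(\phi_1(-u),\ldots,\phi_n(-u)\big)$ again admits an AAT: applying the automorphism of $\mathcalboondox{M}_{\mathbb{K},2n}$ induced by $(u,v)\mapsto(-u,-v)$ to the defining polynomial identities of the AAT of $\phi$ produces those of $\rho$, and algebraic independence over $\mathbb{K}$ is preserved by automorphisms. \emph{(ii)} If $\phi$ is convergent on $U_n(\epsilon)$ and $u,w$ are two independent blocks of $n$ variables, the $2n$ series $\phi_1(u),\ldots,\phi_n(u),\phi_1(w),\ldots,\phi_n(w)$ are algebraically independent over $\mathbb{K}$; this follows by specialising $u$ to a generic $a\in U_{\mathbb{K},n}(\epsilon)$ and using the algebraic independence of $\phi_1,\ldots,\phi_n$ over $\mathbb{K}$, by the same density argument as in the proof of Lemma~\ref{algebraic}.

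Next, fixing $j$, I would count transcendence degrees. By \emph{(ii)} (with $w:=v$) the field $\mathbb{K}(\phi_{(u,v)})$ has transcendence degree $2n$ over $\mathbb{K}$; since $\phi$ admits an AAT, $\phi(u+v)$ is algebraic over it, so $\mathbb{K}\big(\phi(u),\phi(v),\phi(u+v)\big)$ still has transcendence degree $2n$. After the linear change of coordinates $w=u+v$, \emph{(ii)} also gives that $\mathbb{K}\big(\phi(u),\phi(u+v)\big)$ has transcendence degree $2n$; being contained in the previous field, it is an algebraic subextension, so each $\phi_j(v)$ is algebraic over $\mathbb{K}\big(\phi(u),\phi(u+v)\big)$. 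Renaming the variables through the linear automorphism $(u,v)=(s-t,t)$ of $\mathbb{C}^{2n}$, I obtain $R_j\in\mathbb{K}[X_1,\ldots,X_n,Z_1,\ldots,Z_n][Y]$ with $R_j\big(\phi(s-t),\phi(s);Y\big)\neq 0$ and $R_j\big(\phi(s-t),\phi(s);\phi_j(t)\big)=0$ in $\mathcalboondox{M}_{\mathbb{K},2n}$.

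Finally, let $\epsilon>0$ be such that $\phi$ is convergent on $U_n(\epsilon)$. Writing $R_j\big(\phi(s-t),\phi(s);Y\big)=\sum_i r_i(s,t)\,Y^i$ with $r_i\in\mathcalboondox{M}_{\mathbb{K},2n}$ and picking $i_0$ with $r_{i_0}\neq 0$, I would choose $a\in U_{\mathbb{K},n}(\epsilon)$ avoiding the nowhere dense, analytically defined set of those $c$ at which $\phi$ has a pole or for which the germ at $t=0$ of $r_{i_0}(c,t)$ vanishes identically (a nonzero convergent series in $(s,t)$ cannot restrict to the zero germ at $t=0$ for all $s$ in a dense set, since not all of its Taylor coefficients in $t$ are identically zero in $s$; for $\mathbb{K}=\mathbb{R}$ the good set is still dense in the real points). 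Evaluating the identity at $s=a$ --- legitimate by analytic continuation, since $a\in U_{\mathbb{K},n}(\epsilon)$ --- gives $R_j\big(\phi(a-t),\phi(a);\phi_j(t)\big)=0$ with $R_j\big(\phi(a-t),\phi(a);Y\big)\neq 0$ in $\mathcalboondox{M}_{\mathbb{K},n}[Y]$; as $\phi(a)\in\mathbb{K}^n$, this shows $\phi_j(t)$ is algebraic over $\mathbb{K}\big(\phi_1(a-t),\ldots,\phi_n(a-t)\big)$. But $\big(\phi_1(a-t),\ldots,\phi_n(a-t)\big)$ is $\rho$ evaluated at $t+b$ with $b:=-a\in U_{\mathbb{K},n}(\epsilon)$, and by \emph{(i)} $\rho$ admits an AAT and is convergent on $U_n(\epsilon)$; hence by Lemma~\ref{algebraic} it is algebraic over $\mathbb{K}(\rho(t))=\mathbb{K}(\phi(-t))$. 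By transitivity $\phi_j(t)$ is algebraic over $\mathbb{K}(\phi(-t))$ for every $j$, i.e.\ $\phi(t)$ is algebraic over $\mathbb{K}(\phi(-t))$, and applying $t\mapsto -t$ finishes the proof. I expect the specialisation $s=a$ to be the only real obstacle: exactly as in Lemma~\ref{algebraic}, one must guarantee that the algebraic relation does not collapse, which forces the genericity choice of $a$; the remaining steps are formal consequences of the AAT together with Lemma~\ref{algebraic}.
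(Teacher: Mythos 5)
Your proposal is correct and follows essentially the same route as the paper: use transcendence degrees to deduce that $\phi(v)$ is algebraic over $\mathbb{K}(\phi(u),\phi(u+v))$, specialise the sum variable to a generic constant $a$ (the paper substitutes $v=-u+a$, you set $s=a$ after the change $(u,v)=(s-t,t)$), and then remove the translation by $a$ via Lemma~\ref{algebraic}. Your facts (i) and (ii) and the explicit genericity argument for the choice of $a$ merely fill in details the paper leaves implicit.
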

\begin{proof}
Take $\epsilon >0$ such that $\phi \in \mathcalboondox{M}_{\mathbb{K},n}^n$ is convergent on $U_n(\epsilon )$.
Since $\phi$ admits an AAT, we know that $\phi (u+v)$ is algebraic over $\mathbb{K}(\phi (u),\phi (v))$.
Taking into account transcendence degrees, it follows that $\phi (v)$ is algebraic over $\mathbb{K}(\phi (u+v),\phi (u))$.
For some $a\in U_{\mathbb{K},n}(\epsilon )$, we may substitute $v$ by $-u+a$, so $\phi (-u+a)$ is algebraic over $\mathbb{K}(\phi (u))$.
By Lemma \ref{algebraic}, $\phi (-u)$ is algebraic over $\mathbb{K}(\phi (-u+a))$ and hence over $\mathbb{K}(\phi (u))$. 
\end{proof}

\begin{lemma}\label{K(Psi)} 
Let $\epsilon >0$. Let $\phi \in \mathcalboondox{M}_{\mathbb{K},n}^n$ be convergent on $U_n(\epsilon )$ such that it admits an AAT.
Then there exist $\epsilon _1\in (0,\epsilon ]$ and $\Psi :=(\psi _0,\ldots ,\psi _n)\in \mathcalboondox{M}_{\mathbb{K},n}^{n+1}$ 
convergent on $U_n(\epsilon _1)$ and algebraic over 
$\mathbb{K}(\phi )$ satisfying $\psi :=(\psi _1,\ldots ,\psi _n)$ admits an AAT, 
$\psi _0$ is algebraic over $\mathbb{K}(\psi )$ and, for each $f \in \mathbb{K}(\Psi )$, $f(-u)\in \mathbb{K}(\Psi (u))$,
there exists $\delta \in (0, \epsilon _1]$ such that for each $a\in U_{\mathbb{K},n}(\delta )$, $f_{u+a}\in \mathbb{K}(\Psi)$ 
and $f_{u+a}$ is convergent on $U_n(\epsilon _1)$.
\end{lemma}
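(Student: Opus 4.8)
The plan is to realize $\mathbb{K}(\Psi)$ as a finitely generated subfield of $\mathcalboondox{M}_{\mathbb{K},n}$, algebraic over $\mathbb{K}(\phi)$, that is stable under the small translations $u\mapsto u+a$ and under $u\mapsto -u$, and then to present it with $n+1$ generators of the required shape by the primitive element theorem. Carrying this out, I would first apply Lemma~\ref{Schwarz} to $\phi$ (choosing the points it produces generically, so that $\phi$ is holomorphic at each of them), obtaining a finite $\mathcal{C}\subset U_{\mathbb{K},n}(\epsilon)$ with $0\in\mathcal{C}$, some $\epsilon'\in(0,\epsilon]$, and elements $A_0,\dots,A_N\in\mathbb{K}(\phi_{(u+a,v+a)}\suchthat a\in\mathcal{C})$ convergent on $U_{2n}(2\epsilon')$, with $\phi_{u+v}$ algebraic over $\mathbb{K}(A_0,\dots,A_N)$ and, by~(\ref{dagger}), each $A_j$ invariant under $(u,v)\mapsto(u+a,v-a)$ for $\|a\|<\epsilon'$. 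After the linear change of coordinates $(u,v)\mapsto(u+v,\,v-u)$ this invariance says that $A_j$ is independent of $v-u$; hence there are $B_0,\dots,B_N\in\mathcalboondox{M}_{\mathbb{K},n}$ with $A_j(u,v)=B_j(u+v)$, and one may take $B_j:=A_j(u,0)$. Writing each $A_j$ as a rational function over $\mathbb{K}$ of the $\phi_i(u+c)$ and $\phi_i(v+c)$, $c\in\mathcal{C}$, and using that the $\phi_i(c)$ lie in $\mathbb{K}$, one obtains $B_j\in L:=\mathbb{K}(\phi_{u+c}\suchthat c\in\mathcal{C})$. Reading the relation ``$\phi_{u+v}$ is algebraic over $\mathbb{K}(B_0(u+v),\dots,B_N(u+v))$'' in the single variable $u+v$ (equivalently, specializing $v=0$) gives that $\phi$ is algebraic over $K:=\mathbb{K}(B_0,\dots,B_N)$, while by Lemma~\ref{algebraic} each $B_j\in L$ is algebraic over $\mathbb{K}(\phi)$. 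Consequently $\mathbb{K}(\phi)$, $K$ and $L$ all have transcendence degree $n$ over $\mathbb{K}$ and are pairwise algebraic over one another.

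The key point is that all sufficiently small translates of the $B_j$ remain inside the \emph{fixed} field $L$: using the invariance once more, $B_j(u+b)=A_j(u+b,0)=A_j(u,b)$, and the right-hand side is again a rational function over $\mathbb{K}$ of the fixed family $\phi_i(u+c)$ ($i\le n$, $c\in\mathcal{C}$), now with constant coefficients $\phi_i(b+c)\in\mathbb{K}$, so $B_j(u+b)\in L$. Thus, for a suitable $\delta_0>0$, the field $M:=\mathbb{K}\big(B_j(u+b)\suchthat j,\ \|b\|<\delta_0\big)$ satisfies $K\subseteq M\subseteq L$; since $L$ is finitely generated over $\mathbb{K}$ and $M$ contains a transcendence basis $\psi_1,\dots,\psi_n$ of $K$ (which is also a transcendence basis of $L$), the extensions $L/\mathbb{K}(\psi)$ and $M/\mathbb{K}(\psi)$ are finite, whence $M$ is finitely generated over $\mathbb{K}$. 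By construction $M$ is closed under $u\mapsto u+a$ for $\|a\|$ small, the admissible bound depending on the element chosen, and every element of $M\subseteq L$, being a rational expression over $\mathbb{K}$ in the $\phi_i(u+c)$, is convergent on $U_n(\rho)$ with $\rho:=\epsilon-\max_{c\in\mathcal{C}}\|c\|>0$. Finally, choosing $\psi_1,\dots,\psi_n$ among the $B_j$ to form a transcendence basis of $K$, the map $\phi$ is algebraic over $\mathbb{K}(\psi)$ (being algebraic over $K$, which is algebraic over $\mathbb{K}(\psi)$), so Lemma~\ref{elimination} shows that $\psi:=(\psi_1,\dots,\psi_n)$ admits an AAT.

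It remains to make the field stable under $u\mapsto -u$. Let $\sigma$ be the $\mathbb{K}$-automorphism of $\mathcalboondox{M}_{\mathbb{K},n}$ induced by $u\mapsto -u$, and put $M^-:=\sigma(M)$ and $N:=M\cdot M^-$. Then $N$ is finitely generated over $\mathbb{K}$ (a compositum of two finitely generated subfields), stable under $\sigma$ (which interchanges $M$ and $M^-$) and under small translations ($\sigma$ conjugates translation by $a$ to translation by $-a$); moreover, by Lemmas~\ref{minus} and~\ref{algebraic}, $M^-$, and hence $N$, is algebraic over $\mathbb{K}(\phi)$, so $N$ has transcendence degree $n$ with $\psi_1,\dots,\psi_n$ a transcendence basis, and every element of $N$ is convergent on a fixed ball $U_n(\epsilon_1)$ with $\epsilon_1\in(0,\epsilon]$ (one may take $\epsilon_1=\rho$). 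Since $\mathbb{K}$ has characteristic zero, the finite extension $N/\mathbb{K}(\psi)$ is separable, so by the primitive element theorem $N=\mathbb{K}(\psi)(\psi_0)=\mathbb{K}(\psi_0,\psi_1,\dots,\psi_n)$ for some $\psi_0$ algebraic over $\mathbb{K}(\psi)$. With $\Psi:=(\psi_0,\dots,\psi_n)$, the field $\mathbb{K}(\Psi)=N$ then has all the asserted properties: $\Psi$ is convergent on $U_n(\epsilon_1)$ and algebraic over $\mathbb{K}(\phi)$, $\psi$ admits an AAT, $\psi_0$ is algebraic over $\mathbb{K}(\psi)$, $f(-u)=\sigma(f)\in\mathbb{K}(\Psi)$ for every $f\in\mathbb{K}(\Psi)$, and for each such $f$ there is $\delta\in(0,\epsilon_1]$ with $f_{u+a}\in\mathbb{K}(\Psi)$, hence convergent on $U_n(\epsilon_1)$, for all $a\in U_{\mathbb{K},n}(\delta)$.

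The step I expect to be the main obstacle is the one opening the second paragraph: that the a priori unbounded family of small translates of the generators $B_j$ all lie inside the single finitely generated field $L$. This is precisely what keeps $M$, and then $N$, finitely generated, so that the primitive element theorem yields exactly the $n+1$ functions $\psi_0,\dots,\psi_n$. Once this is secured, closure under $u\mapsto -u$ comes at no cost by passing to $M\cdot M^-$, and the remaining verifications (the AAT for $\psi$, the transcendence-degree bookkeeping, and the common radius of convergence of the finitely many generators) are routine.
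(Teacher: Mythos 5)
Your overall strategy coincides with the paper's: apply Lemma~\ref{Schwarz}, observe that the invariance~(\ref{dagger}) forces each $A_j(u,v)$ to depend only on $u+v$, generate a field by small translates of the resulting functions $B_j$, show it sits inside the fixed finitely generated field $L=\mathbb{K}(\phi_{u+c}\suchthat c\in\mathcal{C})$, adjoin its image under $u\mapsto -u$, and finish with the primitive element theorem. But the step you yourself single out as the main obstacle is exactly where your argument has a gap. You claim $B_j(u+b)=A_j(u,b)\in L$ for \emph{all} $b$ in a ball around the origin, by substituting $v=b$ into a representation of $A_j(u,v)$ as a rational function over $\mathbb{K}$ of the $\phi_i(u+c)$ and $\phi_i(v+c)$. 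That substitution is licit only when every $\phi_i(b+c)$ is a well-defined element of $\mathbb{K}$ and no denominator vanishes identically after the substitution; these conditions hold only for $b$ in a dense open subset of the ball, not for all small $b$. Indeed they can fail already at $b=0$: Lemma~\ref{Schwarz} forces $0\in\mathcal{C}$ and $\phi$ is merely meromorphic at $0$, so even your definition $B_j:=A_j(u,0)$ as an element of $L$ is unjustified (your parenthetical about choosing the points of $\mathcal{C}$ generically cannot remove $0$ from $\mathcal{C}$). If you instead restrict the generators to generic $b$, then $M$ is no longer visibly closed under small translations, since a translate of a generic point need not be generic; and a limiting argument as in Lemma~\ref{algebraic} only preserves algebraicity, not membership in the rational function field $L$.

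The paper resolves this by a shift of base point, which is the one idea missing from your write-up: it fixes a dense open $U\subset U_{\mathbb{K},n}(\epsilon')$ on which all the substitutions are valid, chooses $b\in U$ together with a whole ball $V$ of radius $\epsilon''$ centered at $b$ and contained in $U$, sets $B_j(u):=A_j(u,b)$, and generates $\mathbb{L}_1$ by the translates $(B_j)_{u+a}=A_j(u,a+b)$ for $a\in U_{\mathbb{K},n}(\epsilon'')$. Every substitution point $a+b$ then lies in $V\subset U$, so containment in $L$ holds for \emph{every} generator, while closure under translation by small $a$ is automatic from the shape of the generating set. With that modification, the rest of your argument (reading the algebraicity of $\phi_{u+v}$ over $\mathbb{K}\big(B_0(u+v),\ldots,B_N(u+v)\big)$ in the single variable $u+v$, the transcendence-degree bookkeeping, passing to the compositum $M\cdot\sigma(M)$, and invoking the primitive element theorem) is sound and matches the paper.
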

\begin{proof}
We will define a field $\mathbb{L}$ generated over $\mathbb{K}$ by certain elements of $\mathcalboondox{M}_{\mathbb{K},n}$, next we will 
prove that each $f\in \mathbb{L}$ satisfy the conclusion of the lemma and finally we find a primitive element $\Psi$ such that 
$\mathbb{L}=\mathbb{K}(\Psi)$.

Let $\epsilon ' \in (0, \epsilon]$, $\mathcal{C}\subset U_{\mathbb{K},n}(\epsilon )$ and 
$A_0,\ldots ,A_N \in \mathbb{K}(\phi _{(u+c,v+c)} \suchthat c\in \mathcal{C})$ 
be the ones provided by Lemma \ref{Schwarz} for $\phi$.
Let $U$ be an open dense subset of $U_{\mathbb{K},n}(\epsilon ')$ such that
\[
U\subset \{  a\in U_{\mathbb{K},n}(\epsilon ') \suchthat \phi (a+c)\in \mathbb{K}^n \text{ for all }  c\in \mathcal{C}\}
\]
and
\[
U\subset \{ a\in U_{\mathbb{K},n}(\epsilon ') \suchthat A_0 (u,a),\ldots ,A_N (u,a) \in \mathcalboondox{M}_{\mathbb{K},n} \}.
\]
In particular, $U\subset \{a\in U_{\mathbb{K},n}(\epsilon ') \suchthat \phi (a)\in \mathbb{K}^n\}$ because $0\in \mathcal{C}$.
Since $U$ is open there exist $b\in U$ and $\epsilon '' \in (0,\epsilon '- \| b\|]$ such that 
\[
V:=\{a\in U_{\mathbb{K},n}(\epsilon ') \suchthat \|a-b\| < \epsilon '' \}\subset U.
\]
Fix such $b$.
Then, for each $a \in U_{\mathbb{K},n}(\epsilon '' )$, each $A_j(u,a+b)$, $j=1,\ldots, N$ is an element of 
$\mathcalboondox{M}_{\mathbb{K},n}$.
We note that since each $A_j (u,v)$ is convergent on $U_{2n}(2\epsilon ')$ and by definition of $b$ and $\epsilon ''$, 
each $A_j(u,a+b)$ is convergent on $U_n(\epsilon ')$, for each $a\in U_{\mathbb{K},n}(\epsilon '' )$.
Also, since each $A_j$ satisfies the equation (\ref{dagger}) of Lemma \ref{Schwarz},
\begin{equation}
A_j(u,a+b)=A_j(u+a,b) \text{ for all }  a\in U_{\mathbb{K},n}(\epsilon '' ).\label{daggerb}
\end{equation}
For each $j\in \{0,\ldots ,N\}$, we define $B_j(u):=A_j(u,b)$.
Let
\[
\mathbb{L}_1 :=\mathbb{K}((B_j)_{u+a} \suchthat a\in U_{\mathbb{K},n}(\epsilon '' ), 0\leq j \leq N).
\]
Since, for each $a\in U_{\mathbb{K},n}(\epsilon '' )$, each $A_j(u,a+b)$ is convergent on $U_n(\epsilon ')$, 
by equation (\ref{daggerb}) all the elements of $\mathbb{L}_1$ are convergent on $U_n(\epsilon ')$ and 
in particular in $U_n(\epsilon '')$.
Let
\[
\mathbb{L}_2 :=\mathbb{K}((B_j)_{-u+a} \suchthat a\in U_{\mathbb{K},n}(\epsilon '' ), 0\leq j \leq N).
\]
Note that all the elements of $\mathbb{L}_2$ are also convergent on $U_n(\epsilon '')$.
Hence, if we define 
\[
\mathbb{L} :=\mathbb{K}((B_j)_{u+a},(B_j)_{-u+a} \suchthat a\in U_{\mathbb{K},n}(\epsilon '' ), 0\leq j \leq N),
\]
all the elements of $\mathbb{L}$ are also convergent on $U_n(\epsilon '')$.

Let us show that 
\[
\mathbb{L}\subset \mathbb{K}(\phi _{u+c},\phi _{-u+c} \suchthat  c\in \mathcal{C}) 
\]
and that each element of $\mathbb{L}$ is algebraic over $\mathbb{K}(\phi )$.

We begin proving that 
\[
\mathbb{L}_1\subset \mathbb{K}(\phi _{u+c} \suchthat c \in \mathcal{C}) 
\]
and that each element of $\mathbb{L}_1$ is algebraic over $\mathbb{K}(\phi )$.
Fix $j\in \{0,\ldots ,N\}$ and $a\in U_{\mathbb{K},n}(\epsilon '' )$.
We recall from Lemma \ref{Schwarz} that $A_j(u,v)$ is convergent on $U_{2n}(2\epsilon ')$ and 
$A(u,v)\in \mathbb{K}(\phi  _{(u+c,v+c)} \suchthat c\in \mathcal{C})$.
Hence we can evaluate $A_j(u,v)$ at $v=a+b$ to deduce that $A_j(u,a+b)\in \mathbb{K}(\phi _{u+c} \suchthat c\in \mathcal{C})$.
Thus, by equation (\ref{daggerb}), $A_j(u+a,b)\in \mathbb{K}(\phi _{u+c} \suchthat c\in \mathcal{C})$.
Hence, $\mathbb{L}_1\subset \mathbb{K}(\phi _{u+c} \suchthat c\in \mathcal{C})$ and therefore, by Lemma \ref{algebraic}, 
each element of $\mathbb{L}_1$ is algebraic over $\mathbb{K}(\phi )$.
By symmetry of $\mathcal{C}$, $\mathbb{L}_2\subset \mathbb{K}(\phi _{-u+c} \suchthat c\in \mathcal{C})$ and 
each element of $\mathbb{L}_2$ is algebraic over $\mathbb{K}(\phi (-u))$.
Therefore $\mathbb{L} \subset \mathbb{K}(\phi_{u+c,-u+c} \suchthat c\in \mathcal{C})$ and, since by Lemma \ref{minus} we have that 
$\phi(-u)$ is algebraic over $\mathbb{K}(\phi(u))$, we deduce that each element of $\mathbb{L}$ is algebraic over $\mathbb{K}(\phi(u))$, 
as required. 

Next, we show that $\phi _1(u+b),\ldots ,\phi _n(u+b)$ are algebraically independent over $\mathbb{K}$.
Let $P\in \mathbb{K}[X_1,\ldots ,X_n]$ be such that $P(\phi _{u+b})=0$.
By notation, for each $a\in U_{\mathbb{K},n}(\epsilon '')$, we have that $P(\phi _{u+b}(a))=0$ if and only if $P(\phi (a+b))=0$.
Hence,
\[
V \subset \{  a\in U_{\mathbb{K},n}(\epsilon ) \suchthat \phi (a)\in \mathbb{K} \text{ and } P(\phi (a))=0\}.
\]
Since $V$ is open in $U_{\mathbb{K},n}(\epsilon )$, $P(\phi )=0$ by the identity principle.
Since $\phi_1,\ldots ,\phi _n$ are algebraically independent over $\mathbb{K}$, $P=0$ and we are done.

Next, we show that $\mathbb{L}$ is finitely generated over $\mathbb{K}$ and its transcendence degree is $n$.
Firstly, we note that $\phi$ is algebraic over $\mathbb{K}(\phi _{u+b})$ because 
the coordinate functions of $\phi _{u+b}$ are algebraically independent over 
$\mathbb{K}$ and $\phi _{u+b}$ is algebraic over $\mathbb{K}(\phi )$ by Lemma \ref{algebraic}.
Since $\phi _{ u+ v}$ is algebraic over $\mathbb{K}(A_0,\ldots ,A_N)$, evaluating each $A_j(u,v)$ at $v=b$ we deduce that 
$\phi _{u+b}$ is algebraic over $\mathbb{K}(B_0,\ldots ,B_N)$.
Therefore, $\phi$ is algebraic over $\mathbb{K}( B_0,\ldots ,B_N)$.
On the other hand, $\mathbb{K}(B_0,\ldots ,B_N)$ is a subset of $\mathbb{K}(\phi _{u+c} \suchthat c\in \mathcal{C})$ and the 
latter field is algebraic over $\mathbb{K}(\phi )$ by Lemma \ref{algebraic}.
Hence the three fields have transcendence degree $n$ over $\mathbb{K}$.
Recall that $\mathcal{C}=-\mathcal{C}$, so 
$\mathbb{K}(\phi _{-u+c} \suchthat c\in \mathcal{C})=\mathbb{K}(\phi _{-u-c} \suchthat c \in \mathcal{C})$.
We also note that $\phi (-u)$ is algebraic over 
$\mathbb{K}(\phi (u))$, so $\mathbb{K}(\phi _{u+c}, \phi _{-u-c} \suchthat c\in \mathcal{C})$
has transcendence degree $n$ over $\mathbb{K}$.
Now, $\mathcal{C}$ is finite and 
\[
\mathbb{K}(B_0(u),\ldots ,B_N(u))\subset \mathbb{L} \subset \mathbb{K}(\phi _{u+c},\phi _{-u-c} \suchthat c\in \mathcal{C}),
\]
therefore, $\mathbb{L}$ is finitely generated over $\mathbb{K}$ and its transcendence degree is $n$.

Fix $f \in \mathbb{L}$ and let us check that $f(-u)\in \mathbb{L}$ and that there exists $\delta >0$ such that for 
every  $a\in U_{\mathbb{K},n}(\delta )$, $f_{u+a}\in \mathbb{L}$ and $f_{u+a}$ is convergent on $U_n(\epsilon '')$.

Since $f \in \mathbb{L}$, there exist $m,m'\in \mathbb{N}$, $j(1),\ldots ,j(m+m')\in \{0,\ldots ,N\}$ 
and $a_1,\ldots , a_{m+m'}\in U_{\mathbb{K},n}(\epsilon '' )$ such that $f$ is a rational function of 
\[
(B_{j(1)})_{u+a_1}, \ldots ,(B_{j(m)})_{u+a_m},(B_{j(m+1)})_{-u+a_{m+1}}, \ldots ,(B_{j(m+m')})_{-u+a_{m+m'}}.
\]
In particular, $f(-u)$ is a rational function of 
\[
(B_{j(1)})_{-u+a_1}, \ldots ,(B_{j(m)})_{-u+a_m}, (B_{j(m+1)})_{u+a_{m+1}}, \ldots ,(B_{j(m+m')})_{u+a_{m+m'}},
\]
so $f(-u)\in \mathbb{L}$.
Take $\delta >0$ such that $\delta <\epsilon '' -\max\{ \| a_1\|,\ldots ,\|  a_{m+m'}\|\}$.
Then, for all $ a\in U_{\mathbb{K},n}(\delta )$, $f _{u+a}\in \mathbb{L}$ and $f_{u+a}$ is convergent on $U_n(\epsilon '')$.

Finally, take $\psi _1,\ldots ,\psi _n\in \mathbb{L}$ algebraically independent 
over $\mathbb{K}$ and $\psi _0$ algebraic over $\mathbb{K}(\psi _1,\ldots ,\psi _n)$ such that 
$\mathbb{L}=\mathbb{K}(\psi _0,\psi _1,\ldots ,\psi _n)$.
Now, since all the elements of $\mathbb{L}$ are algebraic over $\mathbb{K}(\phi )$, $\psi:= (\psi _1,\ldots ,\psi _n)$ admits 
an AAT by Lemma \ref{elimination}.
\end{proof}

We now have all the ingredients to prove our main result.

\begin{proof}[Proof of the Extension Theorem]
Let $\phi:=(\phi _1,\ldots ,\phi _n)\in \mathcalboondox{M}_{\mathbb{K},n}^n$ admit an AAT.
Take $\epsilon >0$ such that $\phi$ is convergent on $U_n(\epsilon )$.
Applying Lemma \ref{K(Psi)} we obtain $\epsilon _1\in (0,\epsilon ]$ and 
$\Psi :=(\psi _0,\ldots ,\psi _n)\in \mathcalboondox{M}_{\mathbb{K},n}^{n+1}$ as in the lemma.
We next check that this $\Psi$ satisfies the conditions of the theorem.

$(1)$ By Lemma \ref{K(Psi)}, if $f\in \mathbb{K}(\Psi)$ then $f(-u)\in \mathbb{K}(\Psi)$, so we only have to check 
$f(u+v)\in \mathbb{K}\big(\Psi _{(u,v)}\big)$.
Fix a non-constant $f\in \mathbb{K}(\Psi )$ and $\delta \in (0, \epsilon _1]$ such that $f_{u+a}\in \mathbb{K}(\Psi )$, for each 
$a\in U_n(\delta)$, as in Lemma \ref{K(Psi)}.
Let $0<\varepsilon < \delta$ be such that $f_{u+v}$ is convergent on $U_{2n}(\varepsilon )$.
Let $U$ be an open connected subset of $U_n(\varepsilon )$ such that $\Psi(u)$ is analytic on $U$. 
In particular, $\Psi_{(u,v)}$ is analytic on $U\times U$.  
On the other hand, if for each $a\in U$ we have that $g(u,v):=f(u+v)$ is not analytic in $(a,a)$ then we would deduce that $f(u)$ 
is not analytic on an open subset of $U_n(\varepsilon)$, a contradiction.
Therefore, shrinking $U$ we can assume that $g(u,v):=f(u+v)$ is also analytic on $U\times U$.  
By Lemma \ref{K(Psi)}, we have that $g(u,a) \in \mathbb{K}(\Psi (u))$ and $g(a,v) \in \mathbb{K}(\Psi(v))$, for each $a \in U$. 
Hence, by Bochner \cite[Theorem 3]{Bochner}, $g(u,v) \in \mathbb{C}(\Psi _{(u,v)})$ on $U\times U$. 
Since $U\times U$ is an open subset of $U_{2n}(\varepsilon)$, it follows that $g(u,v) \in \mathbb{C}(\Psi _{(u,v)})$ on $U_{2n}(\varepsilon)$. 
Moreover, clearly $g(u,v) \in \mathbb{K}(\Psi _{(u,v)})$ on $U_{2n}(\varepsilon)$ since both 
$\Psi \in \mathcalboondox{M}_{\mathbb{K},n}^{n+1}$ and $f\in \mathbb{K}(\Psi)$.
This concludes the proof of $(1)$.

\smallskip

$(2)$ We may assume that $\psi _0\neq 0$.
Fix $i\in \{0,\ldots ,n\}$.
We have already shown that $\psi _i(u+v)\in \mathbb{K}(\Psi _{(u,v)})$.
Let $A(u,v):=\psi _i(u+v)$.
By Lemma \ref{K(Psi)} and taking a smaller $\epsilon >0$ if necessary, we may assume that $\Psi$ is convergent on 
$U_n(\epsilon )$ and $\mathbb{K}(\Psi _{u+a})\subset \mathbb{K}(\Psi)$, for each $a\in U_{\mathbb{K},n}(\epsilon)$.
Let us show that there exists $c\in U_{\mathbb{K},n}(\epsilon)$ such that 
\[
A(u+c,u-c)\in \mathcalboondox{M}_{\mathbb{K},n}.
\]
Take $\alpha ,\beta \in \mathcal{O}_{\mathbb{K},2n}$, $\beta \neq 0$ such that 
$A(u,v)=\frac{\alpha (u,v) }{\beta (u,v)}$.
Suppose by contradiction that $\beta (u+c,u-c)=0$ for all $c\in U_{\mathbb{K},n}(\epsilon)$.
Then 
\[
\beta \left(\frac{a+b}{2}+\frac{a-b}{2},\frac{a+b}{2}-\frac{a-b}{2}\right) =0,
\]
for all $a,b\in U_{\mathbb{K},n}(\epsilon/2)$.
So $\beta (a,b)=0$, for all $(a,b)\in U_{\mathbb{K},n}(\epsilon/2)$, that is, $\beta =0$, which is a contradiction.
Consequently,
\[
\psi _i(2u)=A(u+c,u-c)\in \mathbb{K}(\Psi _{u+c}( u),
\Psi _{u-c}(u))\subset \mathbb{K}(\Psi (u)).
\]

By induction we deduce that 
\[
\psi _0(u),\ldots ,\psi _n(u)\in \mathbb{K}(\Psi (2^{-N}u)),
\]
for each $N\in \mathbb{N}$.
Hence since $\Psi (2^{-N}u)$ is convergent on $U_n(2^N\epsilon)$, $\Psi$ is also convergent on $U_n(2^N\epsilon)$. 
Thus each $\psi_i$ is the quotient of two power series convergent in all $\mathbb{C}^n$ 
(by Poincar\'e's problem \cite[Ch. VIII, \textsection B, Corollary 10]{Gunning_Rossi}).
\end{proof}

\begin{proof}[Proof of Corollary \ref{C2}]
Let $\phi \in \mathcalboondox{M}_{\mathbb{K},n}^n$ admit an AAT.
By Theorem \ref{T2}, there exists $\psi \in \mathcalboondox{M}_{\mathbb{K},n}^n$ admitting an AAT whose coordinate functions are the 
quotient of two convergent whose complex domain of convergence is $\mathbb{C}^n$ and such that $\psi$ is algebraic over 
$\mathbb{K}(\phi)$.
Since the coordinate functions of $\psi$ are algebraically independent, $\phi$ is algebraic over $\mathbb{K}(\psi)$.
\end{proof}

\section*{Acknowledgements}

The second author thanks E. Pantelis for the support to attend ``Summer School in Tame Geometry'', 
Konstanz, July 18-23, 2016, where the results of this paper were presented.
The authors also would like to thank José F. Fernando for helpful suggestions on an earlier version of this paper and Mark Villarino 
for his comments.

\bibliographystyle{abbrv}
\bibliography{research}

\end{document}